\newtheorem{thm}{Theorem}[section]
\newtheorem{lem}[thm]{Lemma}
\newtheorem{prop}[thm]{Proposition}
\newtheorem{mainthm}[thm]{Main Theorem}
\theoremstyle{definition}
\newtheorem{defn}[thm]{Definition}
\newtheorem{rem}[thm]{Remark}
\numberwithin{equation}{section}
\newcommand\myeq{\mathrel{\stackrel{\makebox[0pt]{\mbox{\normalfont\tiny def}}}{=}}}
\def\a#1{\mathfrak{#1}}
\def\R{\mathrm{RRA}}
\def\Me{\mathrm{REL}}
\def\con#1{#1 \breve{\text{  }}}
\def\i{1\text{\textquoteright}}
\begin{document}


\baselineskip=17pt



\title[Stone type theorems via games]{Stone type representation theorems via games}

\author[T. Aslan]{Tu\u{g}ba Aslan}
\address{Tu\u{g}ba Aslan, Faculty of Engineering and Natural Sciences, Bahcesehir University, Istanbul, Turkey}
\email{tgbasln@gmail.com}
\author[M. Khaled]{Mohamed Khaled}
\address{Mohamed Khaled, Faculty of Engineering and Natural Sciences, Bahcesehir University, Istanbul, Turkey \& Alfr\'ed R\'enyi Institute of Mathematics, Hungarian Academy of Sciences, Budapest, Hungary}
\email{mohamed.khalifa@eng.bau.edu.tr}

\date{}

\begin{abstract}
The classes of relativized relation algebras (whose units are not necessarily transitive as binary relations) are known to be finitely axiomatizable. In this article, we give a new proof for this fact that is easier and more transparent than the original proofs. We give direct constructions for all cases, whereas the original proofs reduced the problem to only one case. The proof herein is combinatorial and it uses some techniques from game theory.
\end{abstract}

\subjclass[2010]{Primary 03G15; Secondary 06E25, 03B45}

\keywords{representability, relation algebras, games and networks}
\maketitle

\section{Introduction}
Relation algebras are algebras of binary relations that were introduced by A. Tarski in 1940's (see, e.g. \cite{relalg}). The creation of these algebras was heralded by the pioneering work of A. De Morgan, E. Schr\"oder and C. S. Peirce in the theory of binary relations. Relation algebras were shown to be important in various disciplines, e.g. in mathematics, computer science, linguistics and cognitive science. These algebras were heavily studied by many scholars in the fields of logic and algebra alike.

Here, we consider some varieties containing the ``concrete'' relation algebras, namely the relativized relation set algebras. Let $H\subseteq\{r,s\}$, where $r$ and $s$ stand for reflexive and symmetric respectively. A relation $W\subseteq U\times U$ is said to be an $H$-relation on $U$ if it satisfies the properties in $H$. For instance, if $r\in H$ then we expect $W$ to contain $(u,u)$, for each $u\in U$.

\begin{defn}\label{def}
A relativized relation set algebra $\a{A}$ is a subalgebra of an algebra of the form $$\a{Re}(W)\myeq\langle \mathcal{P}(W), \cup,\cap,\setminus,\emptyset,W,\circ,\otimes,\delta\rangle,$$
where $W\subseteq U\times U$, for some set $U$, and the non-Boolean operations are defined as follows: $\delta=\{(x,y)\in W: x=y\}$. Let $R,S\subseteq W$, then
$$R\circ S=\{(x,y)\in W: \exists z\in U ((x,z)\in R\text{ and }(z,y)\in S)\}$$ and $\otimes R=\{(x,y)\in W:(y,x)\in R\}$. The set $W$ is called the unit of $\a{A}$ and the smallest set $U$ that satisfies $W\subseteq U\times U$ is called the base of $\a{A}$. The class of all relativized relation set algebras is denoted by $\R_{\emptyset}$. Let $H\subseteq\{r,s\}$. The class of $H$-relativized relation set algebras is given by
$$\R_H=\{\a{A}\in \R_{\emptyset}: \text{ the unit of }\a{A}\text{ is an } H-\text{relation on the base of }\a{A}\}.$$
\end{defn}
The classes of relativized relation algebras were shown to be finitely axiomatizable. For the special case $H=\{r,s\}$, the finite axiomatizability was established by R. Maddux
\cite{madd82} in 1982. Then, in 1991, R. Kramer \cite{kramer} proved the finite axiomatizability for arbitrary $H$, by reducing the problem to the case $H=\{r,s\}$ and then applying Maddux's result \cite{madd82}. A different axiomatization for the case $H=\{r,s\}$ was also given by M. Marx et al. \cite{marx}.

In \cite[Theorem 7.5]{HHB}, R. Hirsch and I. Hodkinson used a new technique to simplify Maddux's proof. They used games and networks to build step by step representations. Despite the fact that this method was proved to be effective, it was not used to simplify Kramer's proof yet. Note that \cite[Theorem 7.5]{HHB} can not be applied verbatim to the case of arbitrary $H$, as it essentially depends on the reflexivity and the symmetry of the  units of the algebras in $\R_{\{r,s\}}$.

In this paper, we give a new proof for the finite axiomatizability of the class $\R_H$ for any arbitrary $H\subseteq\{r,s\}$. We give a direct construction for all the cases, unlike the original proof \cite{kramer}. We adapt Kramer's axioms and we generalize the method of Hirsch and Hodkinson in a non-trivial way.
\begin{defn}[R. Kramer \cite{kramer}]\label{axioms}
The class $\Me$ is defined to be the class of all algebras of the form
$$\a{A}=\langle A,+,\cdot,-,0,1,;,\con{}, \i\rangle\footnote{Here, the operations $+,\cdot,-,0,1,;,\con{}$ and $\i$ are the abstract versions of the concrete operations $\cup,\cap,\setminus,\emptyset, W, \circ,\otimes$ and $\delta$, respectively, in  Definition~\ref{def}.}$$
which satisfy the axioms (Ax 1) through (Ax 9) listed below.
\begin{enumerate}[(Ax 9)]
\item[(Ax 1)] $\a{BlA}=\langle A,+,\cdot,-,0,1\rangle$ is a Boolean algebra.
\item[(Ax 2)] $\con{(x\cdot\con{y})}=\con{x}\cdot y$.
\item[(Ax 3)] $(x+y);z=x;z+y;z$ and $x;(y+z)=x;y+x;z$.
\item[(Ax 4)] $1;0=0$ and $0;1=0$.
\item[(Ax 5)] $(\con{x};y)\cdot z=(\con{x};(y\cdot (\con{(\con{x})};z)))\cdot z$ and $(x;\con{y})\cdot z=((x\cdot(z;\con{(\con{y})}));\con{y})\cdot z$.
\item[(Ax 6)] $\i;x\leq x$ and $x;\i\leq x$.
\item[(Ax 7)] $\i;\i=\i$.
\item[(Ax 8)] $(-\con{1};-\con{1})\cdot \i=0$.
\item[(Ax 9)] $((x\cdot \i);y);z=(x\cdot \i);(y;z)$, $(x;(y\cdot \i));z=x;((y\cdot \i);z)$ and $(x;y);(z\cdot \i)=x;(y;(z\cdot \i))$.
\end{enumerate}
Let $H\subseteq\{r,s\}$ be arbitrary. We define $\Me_H$ to be the class that consists of those $\a{A}\in\Me$ that satisfy (Ax $p$) given below, for each property $p\in H$. Note that $\Me=\Me_{\emptyset}$.
\begin{enumerate}[(Ax $p$)]
\item[(Ax $s$)] $\con{1}=1$.
\item[(Ax $r$)] $\i;1=1$ and $1;\i=1$.
\end{enumerate}
\end{defn}
\begin{rem}
We use the convention that $-\con{x}=-(\con{x})$. For example, axiom (Ax 8) above says that $(-(\con{1});-(\con{1}))\cdot \i=0$.
\end{rem}
For any class $\mathrm{K}$ of algebras, $\mathbb{I}\mathrm{K}$ is the class that consists of all isomorphic copies of the members of $\mathrm{K}$. As we mentioned before, we aim to reprove the following theorem.
\begin{mainthm}\label{main}
For each $H\subseteq\{r,s\}$, we have $\Me_H=\mathbb{I}\R_H$.
\end{mainthm}

\section{Atoms in the perfect extensions}
Recall the basic concepts of Boolean algebras with operators (BAO) from the literature, see e.g. \cite{tarski}. For any Boolean algebra with operators $\mathfrak{B}$, let $At(\mathfrak{B})$ be the set of all atoms in $\mathfrak{B}$.

Let $\a{A}\in\Me$. Denote by $\a{A}^+$ the perfect extension of $\a{A}$ defined in \cite{tarski} (to  form this perfect extension, we need to make sure that both operators $;$ and $\con{}$ are normal and additive, but this follows from axioms (Ax 3), (Ax 4) and \cite[Theorem 1.3 (i), (iii)]{kramer}). Clearly, $\a{A}^+\in\Me$ since the negation occurs only in a constant axiom and in the Boolean axioms, see \cite[Theorem 2.18]{tarski}. If one can show that $\a{A}^+$ is representable, i.e. $\a{A}^+\in\mathbb{I}\R_{\emptyset}$, then we can deduce that $\a{A}$ is also representable.

These perfect extensions are complete, atomic and their non-Boolean operations are completely additive \cite[Theorem 2.4 and Theorem 2.15]{tarski}. So, we define perfect algebras as follows.

\begin{defn}\label{perfect}
An algebra $\a{A}\in\Me$ is said to be a perfect algebra if and only if $\a{A}$ is complete, atomic and its conversion and composition are completely additive.
\end{defn}
Now, we prove several Lemmas that seem to be interesting in their own right. To represent a perfect algebra $\a{A}$, we roughly represent each atom $a\in At(\a{A})$ by a tuple $(s,e)$. Geometrically, such a tuple $(s,e)$ can be viewed as an arrow that starts at $s$ and ends at $e$. Then, we show that $\a{A}$ can be embedded into the full algebra whose unit consists of all arrows representing atoms.

The real challenge now is to arrange that
the unit has the desired properties, by adding the converse $(e,s)$, the  starting $(s,s)$, or the ending $(e,e)$ arrows whenever it is necessary. Such new arrows need to be associated to some atoms to keep the claim that each atom is represented by some arrows (maybe more than one), and there are no irrelevant arrows. For example, the following Lemma defines the converse of an ``arrow-atom'', if it exists.

\begin{lem}\label{cdef}
Let $\a{A}\in\Me$ be a perfect algebra, and let $a,b\in At(\a{A})$ be such that $\con{a}\not=0$ and $\con{b}\not=0$. Then the following are true.
\begin{enumerate}[(1)]
\item $\con{a}$ is an atom in $\a{A}$.
\item $\con{(\con{a})}\not=0$ and $\con{(\con{a})}=a$.
\item $\con{a}=\con{b}\implies a=b$.
\end{enumerate}
\end{lem}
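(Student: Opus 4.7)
The plan is to exploit axiom (Ax 2), $\con{(x\cdot\con{y})}=\con{x}\cdot y$, together with the complete additivity of the converse (which, for a perfect algebra, gives both $\con{0}=0$ and monotonicity $x\leq y\Rightarrow\con{x}\leq\con{y}$). The key identity I would establish first is the double-converse formula: substituting $x=1$ in (Ax 2) immediately yields
\begin{equation*}
\con{(\con{y})}=\con{1}\cdot y \qquad\text{for every } y\in A.
\end{equation*}
In particular, when $a$ is an atom, $\con{(\con{a})}\in\{0,a\}$, with the value determined by whether or not $a\leq\con{1}$.

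For part~(2), I would apply (Ax 2) a second time, with $x=a$ and $y=\con{a}$. This gives $\con{(a\cdot\con{(\con{a})})}=\con{a}\cdot a=\con{a}$. Substituting the double-converse identity inside the argument on the left reduces this to $\con{(a\cdot\con{1})}=\con{a}$. Since $\con{a}\neq 0$ while $\con{0}=0$, we must have $a\cdot\con{1}\neq 0$; atomicity then forces $a\leq\con{1}$, and consequently $\con{(\con{a})}=\con{1}\cdot a=a$. The structural fact to extract here, which drives the rest of the argument, is that \emph{every atom with non-zero converse automatically lies below $\con{1}$}.

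For part~(1), I would pick an atom $b\leq\con{a}$, which exists by atomicity since $\con{a}\neq 0$. By monotonicity of the converse, $\con{b}\leq\con{(\con{a})}=a$ and also $b\leq\con{a}\leq\con{1}$. The inequality $b\leq\con{1}$ lets me rerun the argument of part~(2) with $b$ in place of $a$, guaranteeing $\con{b}\neq 0$; since $a$ is an atom, $\con{b}\leq a$ forces $\con{b}=a$. Now applying part~(2) to $b$ yields $b=\con{(\con{b})}=\con{a}$, so $\con{a}$ is an atom. Part~(3) is then an immediate corollary: if $\con{a}=\con{b}$, then by part~(2) applied on both sides, $a=\con{(\con{a})}=\con{(\con{b})}=b$.

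I do not anticipate any significant obstacle. The only conceptual hurdle is recognising that the single instance $\con{(\con{y})}=\con{1}\cdot y$ derived from (Ax 2) is already strong enough to force every ``invertible'' atom below $\con{1}$, after which the three claims unfold as short corollaries. The remaining work is routine use of atomicity together with the additivity of the converse built into Definition~\ref{perfect}.
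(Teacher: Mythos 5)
Your proof is correct, but it follows a genuinely different route from the paper's. The paper treats Kramer's Theorem~1.3 as a black box: for~(1) it uses completeness and complete additivity to write $1$ as the sum of all atoms, extracts an atom $a^{-}$ with $a\cdot\con{(a^{-})}\not=0$, and then sandwiches $\con{a}$ via the Peircean property ($a\cdot\con{(a^{-})}\not=0\iff\con{a}\cdot a^{-}\not=0$) together with monotonicity and $\con{(\con{x})}\leq x$; part~(2) is then deduced from~(1) using a further clause of Theorem~1.3 and normality. You instead isolate the single instance $\con{(\con{y})}=\con{1}\cdot y$ of (Ax~2) (taking $x=1$), use it to show that every atom with non-zero converse lies below $\con{1}$, and let everything unfold from there --- proving~(2) first, independently of~(1), and then getting~(1) by picking one atom $b\leq\con{a}$ and showing $\con{b}=a$ and $b=\con{a}$. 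This is leaner: it needs only atomicity, normality and monotonicity of $\con{}$ (no completeness, no full atom decomposition of $1$, and none of the cited clauses of Kramer's Theorem~1.3 beyond what (Ax~2) already yields), and it makes explicit the structural fact driving the lemma, namely that $\{a\in At(\a{A}):\con{a}\not=0\}$ is exactly the set of atoms below $\con{1}$. The paper's version has the advantage of reusing machinery (Theorem~1.3) that is needed elsewhere anyway. One small slip to fix: with $x=a$ and $y=\con{a}$, (Ax~2) gives $\con{(a\cdot\con{(\con{a})})}=\con{a}\cdot\con{a}=\con{a}$, not $\con{a}\cdot a$ as you wrote; the endpoints of your chain are right and the argument is unaffected, but the middle term should be corrected.
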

\begin{proof}Let $\a{A}\in\Me$ be a perfect algebra, and let $a,b\in At(\a{A})$ be such that $\con{a}\not=0$ and $\con{b}\not=0$. Then, by \cite[Theorem 1.3 (i), (iv) and (v)]{kramer}, $a\cdot\con{1}\not=0$.
\begin{enumerate}[(1)]
\item The atomicity and the completeness of $\a{A}$ imply that $$1=\sum\{a^{-}\in\a{A}: a^{-}\text{ is an atom}\}.$$
Since $\con{}$ is completely additive, there exists an atom $a^{-}\in At(\a{A})$ such that $a\cdot\con{(a^{-})}\not=0$. By \cite[Theorem 1.3 (ii)]{kramer}, we must also have $\con{a}\cdot a^{-}\not=0$. Thus, $a^{-}\leq\con{a}$ and $a\leq\con{(a^{-})}$. Hence, by \cite[Theorem 1.3 (iii), (iv)]{kramer},
$$a^{-}\leq \con{a}\leq \con{(\con{(a^{-})})}\leq a^{-}.$$
Hence, $a^{-}=\con{a}$ which means that $\con{a}$ is an atom.
\item We have shown that $\con{a}$ is an atom. Thus, by \cite[Theorem 1.3 (v)]{kramer}, we have $\con{(\con{(\con{a})})}\not=0$. By the fact that $\con{}$ is a normal operator, it follows that $\con{(\con{a})}\not=0$. By \cite[Theorem 1.3 (iv)]{kramer}, we have $\con{(\con{a})}\leq a$. But $a$ is an atom, thus $\con{(\con{a})}=a$ as desired.
\item This follows immediately from (2), indeed $$\con{a}=\con{b}\implies a=\con{(\con{a})}=\con{(\con{b})}=b.$$
\end{enumerate}
Thus, we have shown that the conversion operator $\con{}$ is an idempotent bijection if it is restricted to the set $\{a\in At(\a{A}):\con{a}\not=0\}$.
\end{proof}
Similarly, we need to define identity atoms for the starting and the ending arrows of each atom, if there are such identity atoms. To test whether any of the starting arrow or the ending arrow exists for an atom $a$, we define the following entities: $st(a)=\i;a$ and $end(a)=a;\i$.

\begin{lem}\label{sedef}Let $\a{A}\in\Me$ be a perfect algebra, and let $a\in At(\a{A})$.
\begin{enumerate}[(1)]
\item Suppose $st(a)\not=0$. Then there is a unique atom $a^{-}\in At(\a{A})$ (denoted by $\mathcal{S}a$) such that $a^{-}\leq\i$ and $a^{-};a=a$.
\item Suppose $end(a)\not=0$. Then there is a unique atom $a^{-}\in At(\a{A})$ (denoted by $\mathcal{E}a$) such that $a^{-}\leq\i$ and $a;a^{-}=a$.
\end{enumerate}
\end{lem}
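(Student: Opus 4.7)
The plan is to exploit the fact that $\a{A}$ is a perfect algebra: complete, atomic, and with completely additive operators. I will treat part (1) in detail; part (2) then follows by an entirely symmetric argument.

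For existence in part (1), I would start by decomposing $\i$ using atomicity and completeness as $\i = \sum\{e\in At(\a{A}) : e\leq \i\}$, and then use complete additivity of $;$ to distribute, obtaining $\i;a = \sum\{e;a : e\in At(\a{A}),\ e\leq\i\}$. Axiom (Ax 6) forces $\i;a\leq a$; since $st(a) = \i;a\neq 0$ and $a$ is an atom, in fact $\i;a = a$. Consequently at least one summand $e;a$ with $e\leq\i$ atomic must be nonzero, and the monotonicity of $;$ together with (Ax 6) squeezes $e;a$ between $0$ and $a$, forcing $e;a = a$.

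For uniqueness I would argue by contradiction: suppose $e_1\neq e_2$ are two atoms below $\i$ with $e_i;a = a$. Then $e_1\cdot e_2 = 0$, and applying (Ax 6) twice gives $e_1;e_2\leq e_1;\i\leq e_1$ and $e_1;e_2\leq\i;e_2\leq e_2$, so $e_1;e_2\leq e_1\cdot e_2 = 0$. Normality of $;$ then makes $(e_1;e_2);a = 0$. On the other hand, since $e_1\leq\i$ we have $e_1\cdot\i = e_1$, and the first clause of (Ax 9) rewrites
$$(e_1;e_2);a = ((e_1\cdot\i);e_2);a = (e_1\cdot\i);(e_2;a) = e_1;a = a\neq 0,$$
a contradiction, so $e_1 = e_2$.

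Part (2) is identical in spirit, with the third clause of (Ax 9), namely $(x;y);(z\cdot\i) = x;(y;(z\cdot\i))$, playing the role of the first clause; applied with $z = e_2\leq\i$, it gives $(a;e_1);e_2 = a;(e_1;e_2)$, so the same monotonicity-versus-associativity clash produces the required contradiction. The main obstacle I anticipate is simply spotting the right instance of the restricted associativity (Ax 9) at the crucial step of each uniqueness argument; after that, everything reduces to the routine interaction of atomicity, monotonicity, and complete additivity.
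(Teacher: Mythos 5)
Your proof is correct. The existence argument is exactly the one the paper leaves implicit (``guaranteed by the perfectness of $\a{A}$ and axiom (Ax 6)''): decompose $\i$ into atoms, distribute by complete additivity, and squeeze $e;a$ between $0$ and the atom $a$ using (Ax 6). For uniqueness, both you and the paper make the argument turn on the restricted associativity (Ax 9) applied to a triple product involving $e_1;e_2$, but the routes differ in detail: the paper composes on the right with $1$, computing $a\leq (a_1^{-};1)\cdot(a_2^{-};1)=(a_1^{-}\cdot a_2^{-});1$ via two imported facts from Kramer's Theorem 1.8 (that identity atoms satisfy $x;(y;1)=(x;1)\cdot(y;1)$ and $x;y=x\cdot y$), and then concludes from (Ax 4). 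You instead compose on the right with $a$ itself, first deriving $e_1;e_2\leq e_1\cdot e_2=0$ from (Ax 6) and monotonicity, and then clashing $(e_1;e_2);a=0$ against $(e_1\cdot\i);(e_2;a)=e_1;a=a$. Your version buys self-containment --- it uses only the listed axioms (Ax 3), (Ax 4), (Ax 6), (Ax 9) and needs no citation to Kramer --- at the cost of the extra observation that distinct identity atoms annihilate each other; the paper's version is shorter on the page only because it outsources that observation. Your treatment of part (2) via the third clause of (Ax 9) is likewise correct.
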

\begin{proof}
Let $\a{A}\in\Me$ be a perfect algebra, and let $a\in At(\a{A})$ be an atom.
\begin{enumerate}[(1)]
\item Suppose that $st(a)=\i;a\not=0$. The existence of $\mathcal{S}a$ is guaranteed by the perfectness of $\a{A}$ (and axiom (Ax 6)). For the uniqueness, suppose that there are two atoms $a_1^{-}$ and $a_2^{-}$ such that $a^{-}_1\leq \i$, $a^{-}_2\leq \i$, $a_1^{-};a=a$ and $a_2^{-};a=a$. Then,
\begin{eqnarray*}
a&\leq &(a_1^{-};1)\cdot (a_2^{-};1)\hspace{0.8cm} \text{by axiom (Ax 3)}\\
&=& a_1^{-};(a_2^{-};1) \hspace{1cm} \text{by \cite[Theorem 1.8 (iii)]{kramer}}\\
&=& (a_1^{-};a_2^{-});1 \hspace{1cm} \text{by axiom (Ax 9)}\\
&=& (a_1^{-}\cdot a_2^{-});1 \hspace{1cm} \text{by \cite[Theorem 1.8 (i)]{kramer}}.
\end{eqnarray*}
Thus, by axiom (Ax 4) and the fact that both $a^{-}_1$ and $a^{-}_2$ are atoms, it follows that $a_1^{-}=a_2^{-}$.
\item The proof is similar to the proof of item (1) above: use axiom (Ax 9) and \cite[Theorem 1.8 (i) and (iv)]{kramer}.\qedhere
\end{enumerate}
\end{proof}
The following Lemma shows that the processes $\mathcal{S}$ and $\mathcal{E}$ of defining the identity atoms are compatible with the conversion of atoms.

\begin{figure}[!ht]
\begin{tikzpicture}
\tikzset{edge/.style = {->,> = latex'}}
\node (a) at  (0,0) {$s$};
\node (b) at  (3,0) {$e$};
\draw[edge] (a) to [bend left,above] node {$a$} (b);
\draw[edge,dashed] (b) to [bend left,above] node {$\con{a}$} (a);
\Loop[dist=0.7cm,dir=EA,labelstyle=below right,style=dashed](b);
\Loop[dist=0.7cm,dir=WE,labelstyle=below left,style=dashed](a);
\node (c) at  (-0.5,-0.7) {$\mathcal{S}a$};
\node (d) at  (3.5,-0.7) {$\mathcal{E}a$};
\end{tikzpicture}
\end{figure}

\begin{lem}\label{secomp}Let $\a{A}\in\Me$ be a perfect algebra, and let $a\in At(\a{A})$ be an atom such that $\con{a}\not=0$.
\begin{enumerate}[(1)]
\item $st(a)\not=0\implies end(\con{a})\not=0\text{ and }\mathcal{E}\con{a}=\mathcal{S}a$.
\item $end(a)\not=0\implies st(\con{a})\not=0\text{ and }\mathcal{S}\con{a}=\mathcal{E}a$.
\item $st(a)\not=0\implies \mathcal{S}a\leq a;\con{a}$.
\item $end(a)\not=0\implies \mathcal{E}a\leq \con{a};a$.
\end{enumerate}
\end{lem}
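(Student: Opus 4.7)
The plan is to prove parts (3) and (4) first and then derive (1) and (2) from them. Each reduction rests on a carefully chosen instance of the Peirce law (Ax 5), combined with (Ax 4) --- which together with additivity (Ax 3) yields $x;0 = 0 = 0;x$ for every $x$ --- and on the atomicity of $\a{A}$. Throughout, I will freely use Lemma~\ref{cdef} to simplify $\con{(\con{a})} = a$ and $\con{(\con{(\con{a})})} = \con{a}$, which is legitimate because both $a$ and $\con{a}$ are atoms with non-zero converse (Lemma~\ref{cdef}(1)--(2)).

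For (3), apply the second form of (Ax 5), $(x;\con{y}) \cdot z = ((x \cdot (z;\con{(\con{y})}));\con{y}) \cdot z$, with $x := \mathcal{S}a$, $y := \con{a}$, $z := a$. After simplifying the double converses and using the defining identity $\mathcal{S}a;a = a$, this collapses to
\begin{equation*}
a = \bigl((\mathcal{S}a \cdot (a;\con{a}));a\bigr) \cdot a.
\end{equation*}
Since $a \neq 0$, the inner composite cannot vanish; by (Ax 4) this forces $\mathcal{S}a \cdot (a;\con{a}) \neq 0$, and atomicity of $\mathcal{S}a$ yields $\mathcal{S}a \leq a;\con{a}$. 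Part (4) follows by the dual substitution: apply the first form of (Ax 5) to $a;\mathcal{E}a = a$ with $\con{x} := a$, $y := \mathcal{E}a$, $z := a$, and argue identically to get $\mathcal{E}a \leq \con{a};a$.

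For (1), I exploit (3). Apply the first form of (Ax 5), $(\con{x};y) \cdot z = (\con{x};(y \cdot (\con{(\con{x})};z))) \cdot z$, with $x := \con{a}$ (so $\con{x} = a$ and $\con{(\con{x})} = \con{a}$), $y := \con{a}$, $z := \mathcal{S}a$. By (3) the left-hand side equals $\mathcal{S}a$, giving
\begin{equation*}
\mathcal{S}a = \bigl(a;(\con{a} \cdot (\con{a};\mathcal{S}a))\bigr) \cdot \mathcal{S}a.
\end{equation*}
Were $\con{a};\mathcal{S}a$ zero, the right-hand side would collapse to $0$ by (Ax 4), contradicting $\mathcal{S}a \neq 0$. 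Hence $\con{a};\mathcal{S}a \neq 0$, and since $\con{a}$ is an atom (Lemma~\ref{cdef}(1)) with $\con{a};\mathcal{S}a \leq \con{a};\i \leq \con{a}$ by (Ax 6), atomicity forces $\con{a};\mathcal{S}a = \con{a}$. Consequently $end(\con{a}) = \con{a};\i \geq \con{a} \neq 0$, and the uniqueness clause of Lemma~\ref{sedef}(2) delivers $\mathcal{E}\con{a} = \mathcal{S}a$. Part (2) is proved dually: apply the second form of (Ax 5) with $x := \con{a}$, $y := \con{a}$, $z := \mathcal{E}a$, use (4) to collapse the left-hand side, and extract $\mathcal{E}a;\con{a} = \con{a}$, whence $st(\con{a}) \neq 0$ and $\mathcal{S}\con{a} = \mathcal{E}a$ by Lemma~\ref{sedef}(1).

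The main obstacle is choosing substitutions in (Ax 5) that are informative rather than tautological: most naive choices produce an identity of the form $u = u$. The working principle is to pick $x$ and $y$ so the left-hand composite coincides with a known non-zero atom (via $\mathcal{S}a;a = a$, $a;\mathcal{E}a = a$, or parts (3), (4) themselves), and to choose $z$ so that the right-hand side isolates exactly the term one wishes to force to be non-zero, enabling the $x;0 = 0$ contradiction. A secondary subtlety is that every invocation of Lemma~\ref{cdef}(2) must apply to an atom with non-zero converse, but this always traces back to the standing hypothesis $\con{a} \neq 0$.
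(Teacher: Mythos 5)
Your proof is correct. For parts (3) and (4) it coincides with the paper's argument: the same instance of (Ax 5) applied to $\mathcal{S}a;a=a$ (respectively $a;\mathcal{E}a=a$), followed by the observation that a zero factor would annihilate the composite by (Ax 4)/(Ax 3), and then atomicity of $\mathcal{S}a$ (resp.\ $\mathcal{E}a$). Where you genuinely diverge is in (1) and (2). The paper proves these directly and independently of (3)--(4): it takes the converse of $a=\mathcal{S}a;a$ and unwinds it using Kramer's converse-of-composition identity (his Theorem 1.8\,(v)) together with Theorem 1.3\,(iv) and Lemma~\ref{cdef}\,(2), obtaining $\con{a}\leq\con{a};\mathcal{S}a$, and then closes with (Ax 6). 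You instead bootstrap from (3): a second instance of (Ax 5) with $z=\mathcal{S}a$ converts the inequality $\mathcal{S}a\leq a;\con{a}$ into $\con{a};\mathcal{S}a\not=0$, after which (Ax 6) and atomicity of $\con{a}$ give $\con{a};\mathcal{S}a=\con{a}$ exactly as in the paper, and the uniqueness clause of Lemma~\ref{sedef} identifies $\mathcal{E}\con{a}$ with $\mathcal{S}a$. Both routes are sound; yours has the small advantage of staying entirely within the explicitly quoted axioms and the earlier lemmas of the paper (it never needs the converse-distribution law over composition), at the cost of introducing a dependence of (1)--(2) on (3)--(4). Since your (3) and (4) are established without reference to (1) and (2), there is no circularity, and all the auxiliary facts you invoke ($\con{(\con{a})}=a$, normality $x;0=0$, monotonicity of $;$) are correctly justified from Lemma~\ref{cdef}, (Ax 4) and (Ax 3).
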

\begin{proof}
Let $\a{A}\in\Me$ be a perfect algebra, and let $a\in At(\a{A})$ be an atom such that $\con{a}\not=0$.
\begin{enumerate}[(1)]
\item Suppose that $st(a)\not=0$. Recall that $a= \mathcal{S}a;a$. Then, 
\begin{eqnarray*}
\con{a}&=&\con{(\mathcal{S}a;a)}\\
&=&\con{(\con{(\con{(\mathcal{S}a)})};\con{(\con{a})})}\hspace{0.45cm} \text{by Lemma~\ref{cdef} (2) and \cite[Theorem 1.8 (ii)]{kramer}}\\
&=&\con{(\con{(\con{a};\con{(\mathcal{S}a)})})}\hspace{0.65cm} \text{by \cite[Theorem 1.8 (v)]{kramer}}\\
&\leq& \con{a};\con{(\mathcal{S}a)}\hspace{1.2cm}\text{by \cite[Theorem 1.3 (iv)]{kramer}}\\
&=&\con{a};\mathcal{S}a \hspace{1.2cm}\text{by \cite[Theorem 1.8 (ii)]{kramer}}
\end{eqnarray*}
On the other hand, (Ax 6) implies $\con{a};\mathcal{S}a\leq\con{a}$. Hence $\con{a};\mathcal{S}a=\con{a}$. Therefore, $end(\con{a})\not=0$ and $\mathcal{E}\con{a}=\mathcal{S}a$ as desired.
\item The proof is similar to the proof of item (1) above: use Lemma~\ref{cdef} (2), \cite[Theorem 1.8 (ii), (v)]{kramer} and \cite[Theorem 1.3 (iv)]{kramer}.
\item We note that axiom (Ax 5), Lemma~\ref{sedef} (1) and Lemma~\ref{cdef} (2) imply $a=(\mathcal{S}a;a)\cdot a=((\mathcal{S}a\cdot(a;\con{a}));a)\cdot a$. Thus, by axiom (Ax 4), we must have $\mathcal{S}a\cdot(a;\con{a})\not=0$. In other words, $\mathcal{S}a\leq a;\con{a}$.
\item The proof is similar to the proof of item (3) above.\qedhere
\end{enumerate}
\end{proof}
\begin{lem}\label{identityatoms}
Let $\a{A}\in\Me$ be a perfect algebra. Let $a\in At(\a{A})$ be an atom such that $a\leq \i$. Then, $\con{a}\not=0$, $st(a)\not=0$, $end(a)\not=0$, and $$a=\con{a}=\mathcal{S}a=\mathcal{E}a.$$
\end{lem}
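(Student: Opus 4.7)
The plan is to obtain $a;a = a$ as the pivotal fact, and then read off everything else from the uniqueness and compatibility results of Lemmas~\ref{cdef}, \ref{sedef} and \ref{secomp}.

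First I will use perfectness to write $\i = \sum\{b \in At(\a{A}) : b \leq \i\}$. Combining (Ax 7) with complete additivity gives $\i = \i;\i = \sum_{b} \i;b$, while (Ax 6) forces $\i;b \in \{0,b\}$ for each atom $b \leq \i$. Matching this with the decomposition of $\i$ yields $\i;b = b$ for every such $b$, and symmetrically $b;\i = b$. In particular $st(a) = \i;a = a \neq 0$ and $end(a) = a;\i = a \neq 0$. Expanding $a = \i;a = \sum_{b} b;a$, each summand satisfies $b;a \leq \i;a = a$ and $b;a \leq b;\i = b$, hence $b;a \leq a \cdot b$, which vanishes whenever $b \neq a$. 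Therefore $a;a = a$, and the uniqueness clauses in Lemma~\ref{sedef} give $\mathcal{S}a = \mathcal{E}a = a$.

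Next I will show $\con{a} \neq 0$. By complete additivity, $\con{1} = \sum_{y} \con{y}$ over atoms $y$, and Lemma~\ref{cdef} ensures that each nonzero $\con{y}$ is an atom with distinct $y$'s producing distinct $\con{y}$'s. Hence the atoms of $\con{1}$ are precisely $\{\con{y} : y \in At(\a{A}),\ \con{y} \neq 0\}$, so for any atom $a$ one has $\con{a} \neq 0$ if and only if $a \leq \con{1}$. If $\con{a}$ were $0$, then $a \leq -\con{1}$, and combining this with $a \leq \i$ would give $a;a \leq (-\con{1};-\con{1}) \cdot \i = 0$ by (Ax 8), contradicting $a;a = a$.

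To conclude $a = \con{a}$, I invoke Lemma~\ref{secomp}(3): since both $st(a)$ and $\con{a}$ are nonzero, $a = \mathcal{S}a \leq a;\con{a}$; but $a \leq \i$ together with (Ax 6) forces $a;\con{a} \leq \i;\con{a} \leq \con{a}$, so $a \leq \con{a}$, and since both are atoms, they must be equal. The main obstacle is the nonvanishing of $\con{a}$: this is where (Ax 8) enters, and it requires first pinning down the atoms of $\con{1}$ via Lemma~\ref{cdef}. Everything after that is bookkeeping driven by the uniqueness assertions in Lemmas~\ref{sedef} and \ref{secomp}.
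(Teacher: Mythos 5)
Your argument is correct, but it takes a genuinely different route from the paper. The paper disposes of this lemma in one line by citing Kramer's Theorem 1.8 (i) and (ii), which already assert that for subidentity elements composition coincides with meet and conversion is the identity; from those two facts everything in the statement is immediate. You instead rebuild the needed fragment of that theorem from scratch at the level of atoms: you extract $a;a=a$ from (Ax 6), (Ax 7) and complete additivity by decomposing $\i$ into atoms and matching summands; you characterize the atoms $y$ with $\con{y}\not=0$ as exactly the atoms below $\con{1}$ via Lemma~\ref{cdef}, so that (Ax 8) applied to $a;a=a\leq\i$ rules out $\con{a}=0$; and you close the loop with Lemma~\ref{secomp}(3) and (Ax 6) to force $a=\con{a}$. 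All the individual steps check out: the ``matching'' step is legitimate because each $\i;b$ lies in $\{0,b\}$ and an atom below a sum of pairwise disjoint atoms must equal one of them, and the hypotheses of Lemma~\ref{secomp}(3) ($\con{a}\not=0$ and $st(a)\not=0$) are both in hand by the time you invoke it. What your version buys is self-containedness --- it relies only on the axioms, the definition of a perfect algebra, and Lemmas~\ref{cdef}--\ref{secomp} of this paper, rather than importing Kramer's Theorem 1.8 --- at the cost of being considerably longer than the paper's one-line citation. It is also a nice illustration of where (Ax 8) actually does work in this development, something the paper's proof hides inside the reference.
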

\begin{proof}
This follows immediately from \cite[Theorem 1.8 (i) and (ii)]{kramer}.
\end{proof}
Now, we show that $\mathcal{S}$ and $\mathcal{E}$ are compatible with each other and with the composition operator in the perfect algebras.
\begin{lem}\label{secons}
Let $\a{A}\in\Me$ be a perfect algebra, and let $a,b,c\in At(\a{A})$ be some atoms. Suppose that $a\leq b;c$, then the following are true.
\begin{enumerate}[(1)]
\item \begin{enumerate}[(i)]
\item $st(a)\not=0\iff st(b)\not=0$.
\item $st(a)\not=0 \text{ and } st(b)\not=0 \implies \mathcal{S}a=\mathcal{S}b$.
\end{enumerate}
\item \begin{enumerate}[(i)]
\item $end(a)\not=0\iff end(c)\not=0$.
\item $end(a)\not=0 \text{ and } end(c)\not=0 \implies \mathcal{E}a=\mathcal{E}c$.
\end{enumerate}
\item \begin{enumerate}[(i)]
\item $end(b)\not=0\iff st(c)\not=0$.
\item $end(b)\not=0 \text{ and } st(c)\not=0 \implies \mathcal{E}b=\mathcal{S}c$.
\end{enumerate}
\end{enumerate}
\end{lem}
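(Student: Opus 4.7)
My plan is to exploit axiom (Ax 9) (restricted associativity whenever a subidentity appears among the three factors), axiom (Ax 6), the atomicity of $\a{A}$, normality ($0;x=x;0=0$), and complete additivity of composition. The crucial elementary observation I will invoke repeatedly is that whenever $d\in At(\a{A})$ is an atom and $e\leq\i$ is a subidentity atom, axiom (Ax 6) pins $e;d$ (respectively $d;e$) between $0$ and $d$, so $e;d\in\{0,d\}$ and $d;e\in\{0,d\}$.

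For the forward halves of the (i) claims, the plan is to transport the information about $\i;a$ or $a;\i$ across $b;c$ by means of (Ax 9). In (1)(i) forward, monotonicity together with (Ax 9) yields $\i;a\leq\i;(b;c)=(\i;b);c$; if $\i;b=0$, then normality collapses the right-hand side to $0$, forcing $\i;a=0$, which is the contrapositive of the claim. I will prove (2)(i) forward identically with the companion identity $(b;c);\i=b;(c;\i)$, and both directions of (3)(i) will fall out of the two rewritings $b;c=(b;\i);c=b;(\i;c)$.

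For the (ii) claims, and for the remaining backward halves of (1)(i) and (2)(i), I will employ a ``uniqueness plus witness'' pattern. For (1)(ii), assuming $st(a),st(b)\neq 0$, I will chain
$$a=\mathcal{S}a;a\leq \mathcal{S}a;(b;c)=(\mathcal{S}a;b);c,$$
where the last equality follows from (Ax 9). Now $\mathcal{S}a;b\leq \i;b=b$ and $b$ is an atom, so $\mathcal{S}a;b\in\{0,b\}$; the value $0$ would force $a\leq 0$ by normality, so $\mathcal{S}a;b=b$, and the uniqueness clause of Lemma~\ref{sedef}(1) then delivers $\mathcal{S}a=\mathcal{S}b$. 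Claims (2)(ii) and (3)(ii) will fall to the same three-line calculation with the appropriate subidentity ($\mathcal{E}a$, $\mathcal{E}b$, or $\mathcal{S}c$) and the appropriate variant of (Ax 9). The backward halves of (1)(i) and (2)(i)---where the existence of a subidentity on the $a$ side is not yet known---will follow by the same template, started from the witness on the $b$ or $c$ side: complete additivity lets me expand $\mathcal{S}b;(b;c)=\sum\{\mathcal{S}b;d:d\in At(\a{A}),\,d\leq b;c\}$, and the dichotomy $\mathcal{S}b;d\in\{0,d\}$ forces $a$ to coincide with some such $d$ on which $\mathcal{S}b$ acts trivially.

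The hard part, as far as I can see, is purely combinatorial bookkeeping: the three variants of (Ax 9) have to be paired with the three pairs of sides (start/start in (1), end/end in (2), end/start in (3)) and applied in the correct direction. Nothing beyond the tools already developed in Lemmas~\ref{cdef} through \ref{identityatoms} appears to be needed.
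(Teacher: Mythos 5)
Your proof is correct, and its skeleton is the same as the paper's: the forward implications and the uniqueness claims (ii) are obtained exactly as in the paper, by writing $a=\mathcal{S}a;a\leq(\mathcal{S}a;b);c$ via (Ax 9), observing that the subidentity atom composed with an atom lies in $\{0,\text{that atom}\}$ by (Ax 6), and then invoking the uniqueness clause of Lemma~\ref{sedef}. The one place where you genuinely diverge is the backward halves of (1)(i) and (2)(i): the paper handles these with axiom (Ax 5), computing $0\not=a=(\mathcal{S}b;(b;c))\cdot a=(\mathcal{S}b;((b;c)\cdot(\mathcal{S}b;a)))\cdot a$ to extract $\mathcal{S}b;a=a$ (this requires noting $\mathcal{S}b=\con{(\mathcal{S}b)}$ from Lemma~\ref{identityatoms} so that (Ax 5) applies), whereas you instead expand $b;c=\mathcal{S}b;(b;c)=\sum\{\mathcal{S}b;d:d\in At(\a{A}),\,d\leq b;c\}$ by complete additivity and use the dichotomy $\mathcal{S}b;d\in\{0,d\}$ together with atomicity of $a$ to conclude $\mathcal{S}b;a=a$. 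Both arguments are valid in a perfect algebra; yours avoids the rather opaque axiom (Ax 5) at the cost of using completeness and complete additivity, which are available here anyway. One small caution: in (3)(i) the chain $b;c=(b;\i);c=b;(\i;c)$ is not an unconditional identity (only $(b;\i);c=b;(\i;c)$ is an instance of (Ax 9)); each of the outer equalities holds only under the corresponding hypothesis ($end(b)\not=0$ gives $b;\i=b$, and $st(c)\not=0$ gives $\i;c=c$), so each direction of (3)(i) uses one conditional rewriting plus the (Ax 9) identity. Read that way, your argument goes through.
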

\begin{proof}
Let $\a{A}\in\Me$ be a perfect algebra, and let $a,b,c\in At(\a{A})$ be some atoms. Suppose that $a\leq b;c$. We will only show (1) and the other items can be shown in the same way. Suppose $st(a)\not=0$, then $a=\mathcal{S}a;a$. Hence, by axiom (Ax 9),
$$a=\mathcal{S}a;a\leq \mathcal{S}a;(b;c)\leq (\mathcal{S}a;b);c.$$
Thus, $\mathcal{S}a;b\not=0$. Now, by axiom (Ax 6), it is easy to see that $\mathcal{S}a;b=b$, hence $st(b)\not=0$ and $\mathcal{S}b=\mathcal{S}a$. Conversely, suppose that $st(b)\not=0$. Then $b=\mathcal{S}b;b$. Hence, by (Ax 9),
$$a\leq b;c=(\mathcal{S}b;b);c=\mathcal{S}b;(b;c).$$
So by axiom (Ax 5), $0\not=a=(\mathcal{S}b;(b;c))\cdot a=(\mathcal{S}b;((b;c)\cdot(\mathcal{S}b;a)))\cdot a$. Thus, by axioms (Ax 4), we have $st(b)\not=0$ and $(\mathcal{S}b;a)=a$, which means that $st(a)\not=0$ and $\mathcal{S}a=\mathcal{S}b$. Therefore, both (i) and (ii) hold as desired.
\end{proof}
The following Lemma is needed for the constructions in the next section.
\begin{lem}
\label{cycles} Let $\a{A}\in\Me$ be a perfect algebra, and let $a,b,c\in At(\a{A})$ be some atoms. Suppose that $a\leq b;c$, then the following are true.
\begin{enumerate}[(1)]
\item $a\leq\i \implies b=\con{c}\text{ and }c=\con{b}$.
\item $\con{b}\not=0\implies c\leq \con{b};a$.
\item $\con{c}\not=0\implies b\leq a;\con{c}$.
\item $\con{a}\not=0\text{ and }\con{b}\not=0\implies \con{b}\leq c;\con{a}$.
\item $\con{a}\not=0\text{ and }\con{c}\not=0\implies \con{c}\leq \con{a};b$.
\item $\con{a}\not=0, \con{b}\not=0\text{ and }\con{c}\not=0\implies \con{a}\leq \con{c};\con{b}$.
\end{enumerate}
\end{lem}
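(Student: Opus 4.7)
The plan is to prove items (2) and (3) directly from axiom (Ax 5), deduce items (4), (5), (6) by iterating these two rules, and settle item (1) separately using (Ax 6), (Ax 8), and a small observation about which atoms lie below $\con{1}$.

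For item (2), assume $\con{b}\neq 0$. Lemma \ref{cdef} makes $\con{b}$ an atom with $\con{(\con{b})}=b$. Instantiate the first clause of (Ax 5) at $x=\con{b}$, $y=c$, $z=a$; using $a\leq b;c$, the left-hand side $(b;c)\cdot a$ equals $a\neq 0$, so the right-hand side $(b;(c\cdot(\con{b};a)))\cdot a$ is also nonzero. Normality of $;$ (a consequence of (Ax 3) and (Ax 4)) then forces $c\cdot(\con{b};a)\neq 0$, and since $c$ is an atom we obtain $c\leq \con{b};a$. Item (3) is the mirror argument using the second clause of (Ax 5), with $x=b$, $y=\con{c}$, $z=a$.

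Items (4), (5), (6) come from chaining these two rules and using Lemma \ref{cdef} to keep track of the converses being atoms. For (4), apply (2) to $a\leq b;c$ to get $c\leq \con{b};a$, and then apply (3) to this (legal because $\con{a}\neq 0$) to deduce $\con{b}\leq c;\con{a}$. For (5), apply (3) first to get $b\leq a;\con{c}$, then apply (2) using $\con{a}\neq 0$ to obtain $\con{c}\leq \con{a};b$. For (6), take the conclusion of (4), namely $\con{b}\leq c;\con{a}$, and apply (2) once more, using $\con{c}\neq 0$, to get $\con{a}\leq \con{c};\con{b}$.

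For item (1), the extra ingredient I will use is the atom-level dichotomy
\[
\con{x}=0\iff x\leq -\con{1}\qquad (x\in At(\a{A})),
\]
which follows by evaluating (Ax 2) at $(1,x)$ to get $\con{(\con{x})}=\con{1}\cdot x$, and then combining with Lemma \ref{cdef} (2) and the normality of $\con{}$. If both $\con{b}=0$ and $\con{c}=0$, then $b,c\leq -\con{1}$, so monotonicity of $;$ and (Ax 8) force
\[
a=(b;c)\cdot \i\leq (-\con{1};-\con{1})\cdot \i=0,
\]
contradicting $a\neq 0$. By symmetry I may assume $\con{b}\neq 0$. Item (2) gives $c\leq \con{b};a$, and since $a\leq \i$, monotonicity and (Ax 6) yield $c\leq \con{b};a\leq \con{b};\i\leq \con{b}$. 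Both $c$ and $\con{b}$ are atoms, so $c=\con{b}$, and Lemma \ref{cdef} (2) then gives $b=\con{(\con{b})}=\con{c}$. The only non-routine step in the whole proof is this atom dichotomy $\con{x}=0\iff x\leq -\con{1}$; once it is in hand, every item is a short bookkeeping chain built from a single application of (Ax 5) together with Lemma \ref{cdef}.
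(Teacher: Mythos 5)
Your proof is correct. Items (2) and (3) are exactly the paper's argument (a single application of the appropriate clause of (Ax 5) after rewriting $b$ as $\con{(\con{b})}$ via Lemma~\ref{cdef}(2), then normality of $;$ from (Ax 3)--(Ax 4)), and your chaining for (4), (5), (6) coincides with the paper's ``apply (2) then (3)'', ``apply (3) then (2)'', ``apply (4) then (2)''. Where you genuinely diverge is item (1): the paper imports Kramer's Theorem~1.8(vi) to get $\con{b}\not=0$, $\con{c}\not=0$ and $a\cdot(b;c)=a\cdot(\con{c};\con{b})$, and then runs (Ax 5) once more; you instead derive the atom dichotomy $\con{x}=0\iff x\leq-\con{1}$ from (Ax 2) and Lemma~\ref{cdef}(2), use (Ax 8) to rule out $\con{b}=\con{c}=0$, and then finish with item (2) (or (3)) together with (Ax 6). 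This is a self-contained route that trades one external citation for a short axiom computation, and it works; the only point worth tightening is the phrase ``by symmetry'', since $;$ is not commutative --- what you actually mean is that if instead $\con{c}\not=0$ you run the mirror argument through item (3) to get $b\leq a;\con{c}\leq\i;\con{c}\leq\con{c}$, hence $b=\con{c}$ and then $c=\con{(\con{c})}=\con{b}$ by Lemma~\ref{cdef}(2). With that spelled out, both conclusions of (1) follow in either case, so there is no gap.
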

\begin{figure}[!ht]
\centering
\begin{tikzpicture}
\centering
\tikzset{edge/.style = {->,> = latex'}}
\node[shape=circle] (a) at  (0,0) {};
\node[shape=circle] (b) at  (4,0) {};
\draw[edge] (a)  to (b);
\node[shape=circle] (1) at  (2,-0.2) {$a$};
\node[shape=circle] (c) at  (2,2) {};
\draw[edge] (a)  to (c);
\draw[edge] (c)  to (b);
\node[shape=circle] (2) at  (1.2,1) {$b$};
\node[shape=circle] (3) at  (2.8,1) {$c$};
\draw[edge] (c)  to[bend right] (a);
\node[shape=circle] (4) at  (0.35,1.45) {$\con{b}$};
\node[shape=circle] (a1) at  (6,0) {};
\node[shape=circle] (b1) at  (10,0) {};
\draw[edge] (a1)  to (b1);
\node[shape=circle] (11) at  (8,-0.2) {$a$};
\node[shape=circle] (c1) at  (8,2) {};
\draw[edge] (a1)  to (c1);
\draw[edge] (c1)  to (b1);
\node[shape=circle] (21) at  (7.2,1) {$b$};
\node[shape=circle] (31) at  (8.8,1) {$c$};
\draw[edge] (b1)  to[bend right] (c1);
\node[shape=circle] (41) at  (9.65,1.45) {$\con{c}$};
\end{tikzpicture}
\end{figure}
\begin{proof}
Let $\a{A}\in\Me$ be a perfect algebra, and let $a,b,c\in At(\a{A})$ be such that $a\leq b;c$.
\begin{enumerate}[(1)]
\item Suppose that $a\leq \i$. Then, by \cite[Theorem 1.8 (vi)]{kramer}, we have $\con{b}\not=0$ and $\con{c}\not=0$. By \cite[Theorem 1.8 (vi)]{kramer} and axiom (Ax 5), it follows that
$$0\not=a=a\cdot (b;c)=a\cdot (\con{c};\con{b})=(\con{c};(\con{b}\cdot(\con{(\con{c})};a)))\cdot a.$$
Thus, $\con{b}\cdot(\con{(\con{c})};a)\not=0$. By axiom (Ax 6), we have $\con{(\con{c})};a\leq \con{(\con{c})}$. Then, $\con{b}\cdot\con{(\con{c})}\not=0$. Therefore, \cite[Theorem 1.3 (i) and (vi)]{kramer} implies that $b\cdot\con{c}\not=0$, i.e. $b=\con{c}$. Consequently, by \cite[Theorem 1.3 (ii)]{kramer}, $c\cdot\con{b}\not=0$ and $c=\con{b}$.
\item Suppose that $\con{b}\not=0$. Now, by Lemma~\ref{cdef} (2) and axiom (Ax 5),
$$0\not=a=a\cdot (b;c)= a\cdot (\con{(\con{b})};c)=(\con{(\con{b})};(c\cdot (\con{(\con{(\con{b})})};a)))\cdot a.$$
Thus, by axiom (Ax 4), $c\leq \con{(\con{(\con{b})})};a$. The desired follows by \cite[Theorem 1.3 (iv)]{kramer} (or by Lemma~\ref{cdef} (2)).
\item Similarly to item (2) above using the second part of axiom (Ax 5).
\item Apply item (2) then apply item (3), the desired follows.
\item Apply item (3) then apply item (2), the desired follows.
\item Apply item (4) then apply item (2), the desired follows.\qedhere
\end{enumerate}
\end{proof}
\section{Games and networks}
Throughout this section, fix a perfect algebra $\a{A}\in\Me$. We also use von Neumann ordinals.
\begin{defn}An $\a{A}$-pre-network (pre-network, for short) is a  pair $N= (N_1,N_2)$, where $N_1$ is a (possibly
empty) set, and $N_2:N_1\times N_1\rightarrow At(\a{A})$ is a partial map. We write $nodes(N)$ for $N_1$ and $edges(N)$ for the domain of $N_2$; we also may write $N$ for any of $N$, $N_2$, $nodes(N)$ and $edges(N)$.
\end{defn}
We  write $\emptyset$ for the pre-network $(\emptyset,\emptyset)$. For the pre-networks $N$ and $N'$, we write $N\subseteq N'$ if and only if $nodes(N)\subseteq nodes(N')$, $edges(N)\subseteq edges(N')$, and $N'(x,y)=N(x,y)$ for all $(x,y)\in edges(N)$.

Let $\alpha$ be an ordinal. A sequence of pre-networks $\langle N_{\kappa}:\kappa\in\alpha\rangle$ is said to be a chain if $N_{\kappa_1}\subseteq N_{\kappa_2}$ whenever $\kappa_1\in\kappa_2$. Supposing that $\langle N_{\kappa}:\kappa\in\alpha\rangle$ is a chain of pre-networks, define the pre-network $N=\bigcup\{N_{\kappa}:\kappa\in\alpha\}$ with $nodes(N)=\bigcup\{nodes(N_{\kappa}):\kappa\in\alpha\}$, $edges(N)=\bigcup\{edges(N_{\kappa}):\kappa\in\alpha\}$ and its labeling is given as follows: For each $(x,y)\in edges(N)$, we let $N(x,y)=N_{\kappa}(x,y)$, where $\kappa\in\alpha$ is any ordinal with $(x,y)\in edges(N_{\kappa})$.
\begin{defn} \label{network}
An $\a{A}$-network (network, for short) is a pre-network that satisfies the following:
\begin{enumerate}[(N 1)]
\item For all $(x,y)\in edges(N)$, we have
\begin{enumerate}
\item $N(x,y)\leq \i \iff x= y$.
\item $st(N(x,y))\not=0 \iff (x,x)\in edges(N)$.
\item $end(N(x,y))\not=0 \iff (y,y)\in edges(N)$.
\item $\con{N(x,y)}\not=0 \iff (y,x)\in edges(N)$.
\end{enumerate}
\item For all $(x,y)\in edges(N)$,
\begin{center}if $(y,x)\in edges(N)$ then $N(x,y)\cdot\con{N(y,x)}\not=0$.\end{center}
\item For all $(x,y),(x,z),(z,y)\in edges(N)$, $N(x,y)\cdot(N(x,z);N(z,y))\not=0$.
\end{enumerate}
\end{defn}
The following Lemma defines a network, for each atom $a\in\a{A}$. These networks serve as initial networks, one can get more complex networks by composing these initial networks together.
\begin{lem}\label{restr}
Let $a\in At(\a{A})$ and let $x,y$ be nodes, with $x=y$ iff $a\leq\i$. Let $N \myeq N_{xy}^a$ be the pre-network with nodes $x,y$ and with
the following edges:
\begin{itemize}
\item $(x, y)$ is an edge of $N$, with label $N(x, y) = a$.
\item If $st(a)\not=0$ then $(x, x)$ is an edge of $N$ with label $N(x, x) = \mathcal{S}a$.
\item If $end(a)\not=0$ then $(y, y)$ is an edge of $N$ with label $N(y, y) = \mathcal{E}a$.
\item If $\con{a}\not=0$ then $(y,x)$ is an edge of $N$ with label $N(y,x)=\con{a}$.
\end{itemize}
Then $N$ is a well defined network.
\end{lem}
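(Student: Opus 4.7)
The plan is to verify, in order: (a) that the prescribed partial labelling $N_2$ is well defined, and (b) that the three clauses of Definition~\ref{network} hold. Two of the four bullets in the definition of $N$ can assign a label to the same edge only when $x=y$, and this forces $a\leq\i$ by hypothesis; then Lemma~\ref{identityatoms} gives $a=\con{a}=\mathcal{S}a=\mathcal{E}a$, so the four prescribed labels coincide and $N_2$ is single-valued. Along the way I will repeatedly use that each label is an atom of $\a{A}$ ($\con{a}$ by Lemma~\ref{cdef}(1), and $\mathcal{S}a,\mathcal{E}a$ by Lemma~\ref{sedef}).

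For clause (N~1), items (b), (c), (d) at the edge $(x,y)$ are immediate from the construction of $edges(N)$ (together with Lemma~\ref{cdef}(2) for (d)), and (a) at $(x,y)$ is the standing hypothesis $x=y\iff a\leq\i$. The loops $(x,x),(y,y)$, when present, carry identity atoms, so (a)--(d) for them reduce to four trivialities via Lemma~\ref{identityatoms}. The only nontrivial case is the reverse edge $(y,x)$ with label $\con{a}$ in the range $x\neq y$: for (a) one must exclude $\con{a}\leq\i$, which would force $a=\con{(\con{a})}=\con{a}\leq\i$ (by Lemma~\ref{cdef}(2) and Lemma~\ref{identityatoms}) and contradict $x\neq y$; for (b) the required equivalence $st(\con{a})\neq 0 \iff end(a)\neq 0$ is both directions of Lemma~\ref{secomp}(1)--(2) after applying $\con{(\con{a})}=a$; (c) is symmetric; and (d) is again Lemma~\ref{cdef}(2). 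For clause (N~2), the only nontrivial reversed pair is $(x,y),(y,x)$ with labels $a,\con{a}$, and $a\cdot\con{(\con{a})}=a\neq 0$ by Lemma~\ref{cdef}(2); self-loops are fixed by $\con{}$ by Lemma~\ref{identityatoms}.

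Clause (N~3) is where I expect the main effort, since it forces an exhaustive enumeration of triangles $(x',y'),(x',z'),(z',y')$ of edges present in $N$, with up to two nodes and up to four edges. When $x=y$ everything collapses to the single self-loop labelled $a\leq\i$ and the condition reduces to $a\cdot(a;a)=a\neq 0$ using $\mathcal{S}a=a$ from Lemma~\ref{identityatoms}. Assuming $x\neq y$, the triangles whose intermediate node $z'$ coincides with an endpoint of the target edge are tautological: each reduces, via the defining identities $\mathcal{S}a;a=a$ and $a;\mathcal{E}a=a$ of Lemma~\ref{sedef} together with the converse analogues $\mathcal{E}\con{a};\con{a}=\con{a}$ and $\con{a};\mathcal{S}\con{a}=\con{a}$ supplied by Lemma~\ref{secomp}(1)--(2), to a statement of the form $\ell\cdot\ell=\ell\neq 0$. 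The genuinely substantive triangles are those whose target is an identity loop traversed by a detour to the opposite node: for target $(x,x)$ through $y$ the needed inequality $\mathcal{S}a\cdot(a;\con{a})\neq 0$ is exactly Lemma~\ref{secomp}(3), and for target $(y,y)$ through $x$ the needed $\mathcal{E}a\cdot(\con{a};a)\neq 0$ is exactly Lemma~\ref{secomp}(4), which completes the verification.
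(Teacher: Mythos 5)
Your proof is correct and follows essentially the same route as the paper's: well-definedness via Lemma~\ref{identityatoms}, conditions (N\,1)--(N\,2) by direct inspection using Lemmas~\ref{cdef} and~\ref{secomp}(1)--(2), and an exhaustive check of (N\,3) in which the only substantive triangles are precisely the two handled by Lemma~\ref{secomp}(3)--(4). (One small notational slip: the converse identities should read $\mathcal{S}\con{a};\con{a}=\con{a}$ and $\con{a};\mathcal{E}\con{a}=\con{a}$ --- you have $\mathcal{S}$ and $\mathcal{E}$ interchanged --- but this does not affect the argument.)
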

\begin{proof}
Lemma~\ref{identityatoms} implies that $N$ is a well defined pre-network. Now we need to show that $N$ satisfies all the conditions of Definition~\ref{network}.
\begin{enumerate}[(N 1)]
\item By assumptions, we know $x=y\iff N(x,y)=a\leq\i$. Suppose that $\con{a}\not=0$. By Lemma~\ref{cdef} (2) and \cite[Theorem 1.8 (ii)]{kramer}, we have
$$y=x\iff a\leq\i\iff N(y,x)=\con{a}\leq\i.$$
Thus, Condition (N 1a) holds for $N$. Let us check condition (N 1b).  We know from the construction that
\begin{equation}\label{xy}
st(N(x,y))\not=0\iff st(a)\not=0 \iff (x,x)\in edges(N).
\end{equation}
Suppose that $(y,x)\in edges(N)$. Then, by the construction, $\con{a}\not=0$ and $N(y,x)=\con{a}$. Thus, by Lemma~\ref{cdef} (2) and Lemma~\ref{secomp} (1,2), we have $end(a)\not=0$ if and only if $st(\con{a})\not=0$. Hence,
\begin{eqnarray}
st(N(y,x))\not=0 &\iff& st(\con{a})\not=0 \label{yx}\\
&\iff& end(a)\not=0\nonumber\\
&\iff& (y,y)\in edges(N)\nonumber
\end{eqnarray}
Also, by Lemma~\ref{identityatoms} and \cite[Theorem 1.8 (i)]{kramer}, we must have
\begin{eqnarray}
(x,x)\in edges(N) &\implies& N(x,x)=\mathcal{S}a\leq\i\label{xx}\\
&\implies& st(N(x,x))=\i;\mathcal{S}a=\mathcal{S}a,\nonumber
\end{eqnarray}
and
\begin{eqnarray}
(y,y)\in edges(N) &\implies& N(y,y)=\mathcal{E}a\leq\i\label{yy}\\
&\implies& st(N(y,y))=\mathcal{E}a;\i=\mathcal{E}a.\nonumber
\end{eqnarray}
Therefore, by \eqref{xy}, \eqref{yx}, \eqref{xx} and \eqref{yy}, condition (N 1b) holds for all the edges in $N$ as desired. Similarly, one can check that condition (N 1c) holds for $N$. Also, it is not hard to see that condition (N 1d) follows from the construction, Lemma~\ref{cdef} (2) and Lemma~\ref{identityatoms}.
\item Suppose that $(y,x)\in edges(N)$. That means $\con{a}\not=0$ and $N(y,x)=\con{a}$. Recall that $\con{(\con{a})}=a$, by Lemma~\ref{cdef} (2). Thus,
\begin{eqnarray}
N(x,y)\cdot \con{N(y,x)}&=a\cdot \con{(\con{a})}&=a\not=0,\label{cxy}\\
N(y,x)\cdot \con{N(x,y)}&=\con{a}\cdot \con{a}&=\con{a}\not=0\label{cyx}.
\end{eqnarray}
Moreover, Lemma~\ref{identityatoms} implies that
\begin{eqnarray}
(x,x)\in edges(N) &\implies& N(x,x)=\mathcal{S}a\leq\i\label{cxx}\\
&\implies& N(x,x)\cdot\con{N(x,x)}=\mathcal{S}a\not=0,\nonumber
\end{eqnarray}
and
\begin{eqnarray}
(y,y)\in edges(N) &\implies& N(y,y)=\mathcal{E}a\leq\i\label{cyy}\\
&\implies& N(y,y)\cdot\con{N(y,y)}=\mathcal{E}a\not=0.\nonumber
\end{eqnarray}
Therefore, by \eqref{cxy}, \eqref{cyx}, \eqref{cxx} and \eqref{cyy}, it follows that condition (N 2) holds for the pre-network $N$.
\item Note that, by the construction of $N$, there is no triangle (cycle of length $3$ on three different nodes) in $N$.
\begin{itemize}
\item Suppose that $(x,x)\in edges(N)$ and $N(x,x)=\mathcal{S}a$. So, by the definition of $\mathcal{S}a$, we have $\mathcal{S}a;a=a$. Hence,
\begin{equation}\label{com1}
N(x,y)\cdot(N(x,x);N(x,y))=a\cdot (\mathcal{S}a;a)=a\not=0
\end{equation}
Similarly, if $(y,y)\in edges(N)$ then we must have
\begin{equation}\label{com2}
N(x,y)\cdot(N(x,y);N(y,y))=a\cdot (a;\mathcal{E}a)=a\not=0
\end{equation}
\item Suppose that both $(y,y)$ and $(y,x)$ are edges in $N$. Then $\con{a}\not=0$ and $end(a)\not=0$. Thus, by Lemma~\ref{secomp} (2), we have $st(\con{a})\not=0$ and $\mathcal{S}\con{a}=\mathcal{E}a$. Hence, by Lemma~\ref{sedef}, we have
\begin{eqnarray}
N(y,x)\cdot(N(y,y);N(y,x))&=&\con{a}\cdot (\mathcal{E}a;\con{a})\label{com3}\\
&=&\con{a}\cdot (\mathcal{S}\con{a};\con{a})\nonumber\\
&=&\con{a}\not=0\nonumber.
\end{eqnarray}
Similarly, if $(x,x),(y,x)\in edges(N)$ then $\mathcal{E}\con{a}=\mathcal{S}a$ and
\begin{eqnarray}
N(y,x)\cdot(N(y,x);N(x,x))&=&\con{a}\cdot (\con{a};\mathcal{S}a)\label{com4}\\
&=&\con{a}\cdot (\con{a};\mathcal{E}\con{a})\nonumber\\
&=&\con{a}\not=0\nonumber.
\end{eqnarray}
\item By \cite[Theorem 1.8 (i)]{kramer}, if $(x,x)\in edges(N)$ then we have
\begin{equation}\label{com5}
N(x,x)\cdot(N(x,x);N(x,x))=\mathcal{S}a\cdot(\mathcal{S}a;\mathcal{S}a)=\mathcal{S}a\not=0.
\end{equation}
Similarly, by \cite[Theorem 1.8 (i)]{kramer}, if $(y,y)\in edges(N)$ then
\begin{equation}\label{com6}
N(y,y)\cdot(N(y,y);N(y,y))=\mathcal{E}a\cdot(\mathcal{E}a;\mathcal{E}a)=\mathcal{E}a\not=0.
\end{equation}
\item Now, suppose that $(y,x)\in edges(N)$. If $(x,x)\in edges(N)$ then by Lemma~\ref{secomp} (3) we have
\begin{equation}\label{com7}
N(x,x)\cdot (N(x,y);N(y,x))= \mathcal{S}a\cdot(a;\con{a})=\mathcal{S}a\not=0.
\end{equation}
If $(y,y)\in edges(N)$ then by Lemma~\ref{secomp} (4) we have
\begin{equation}\label{com8}
N(y,y)\cdot (N(y,x);N(x,y))= \mathcal{E}a\cdot(\con{a};a)=\mathcal{E}a\not=0.
\end{equation}
\end{itemize}
The facts in \eqref{com1}, \eqref{com2}, \eqref{com3}, \eqref{com4}, \eqref{com5}, \eqref{com6}, \eqref{com7} and \eqref{com8} imply that $N$ satisfies condition (N 3).
\end{enumerate}
Therefore, we have shown that $N$ is a network as required.
\end{proof}
Now, we are ready to introduce a game between a female $\exists$ and a male $\forall$, then we will show that $\exists$ has a winning strategy. 
\begin{defn}Let $\alpha$ be an ordinal. We define a game, denoted by $G_{\alpha}(\a{A})$, with $\alpha$ rounds, in which the players $\forall$ and $\exists$ build a chain of pre-networks $\langle N_{\kappa}:\kappa\in\alpha\rangle$ as follows. In round $0$, $\exists$ starts by letting $N_0=\emptyset$. Suppose that we are in round $\kappa\in\alpha$ and assume that each $N_{\lambda}$, $\lambda\in\kappa$, is a pre-network. If $\kappa$ is a limit ordinal then $\exists$ defines $N_{\kappa}=\bigcup\{N_{\lambda}:\lambda\in\kappa\}$. If $\kappa=(\kappa-1)+1$ is a successor ordinal then the players move as follows.
\begin{enumerate}[(1)]
\item $\forall$ may choose a non-zero element $a\in \a{A}$, and $\exists$ must respond with
a pre-network $N_{\kappa}\supseteq N_{\kappa-1}$ containing an edge $e$ with $N_{\kappa}(e)\leq a$.
\item Alternatively, $\forall$ may choose an edge $(x,y)\in edges(N_{\kappa-1})$ and two elements $b,c\in \a{A}$ with $N_{\kappa-1}(x,y)\leq b;c$. Then, $\exists$ must respond with a pre-network $N_{\kappa}\supseteq N_{\kappa-1}$ such that for some $z\in N_{\kappa}$ (possibly $z\in N_{\kappa-1}$ already), $(x,z),(z,y)\in N_{\kappa}$, $N_{\kappa}(x,z)\leq b$ and $N_{\kappa}(z,y)\leq c$.
\end{enumerate}
$\exists$ wins if each pre-network $N_{\kappa}$, $\kappa\in\alpha$, played during the game is actually a network. Otherwise, $\forall$ wins. There are no draws.
\end{defn}
\begin{prop}\label{winning} Let $\alpha$ be an ordinal. Then, $\exists$ has a winning strategy in the game $G_{\alpha}(\a{A})$.
\end{prop}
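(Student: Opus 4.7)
The plan is to describe $\exists$'s strategy explicitly and verify by transfinite induction on $\kappa \in \alpha$ that every pre-network $N_\kappa$ played during the game is in fact a network. Round $0$ is trivial, and at a limit ordinal $\kappa$, the union $N_\kappa = \bigcup_{\lambda \in \kappa} N_\lambda$ is a network because each clause of Definition~\ref{network} constrains only finitely many edges at once, so the chain condition places any such finite configuration inside some single $N_\lambda$.

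At a successor round, $\exists$ responds to move (1) by using atomicity of $\a{A}$ to choose an atom $a' \leq a$, picking fresh nodes $x, y$ (with $x = y$ iff $a' \leq \i$), forming the initial network $N^{a'}_{xy}$ of Lemma~\ref{restr}, and setting $N_\kappa = N_{\kappa-1} \cup N^{a'}_{xy}$; freshness of the new nodes rules out any cross-edges, so (N 1)--(N 3) reduce to the two already verified pieces. For move (2), write $a = N_{\kappa-1}(x, y) \in At(\a{A})$; using atomicity together with complete additivity of $;$ in both arguments, $a \leq b; c$ yields atoms $b' \leq b$ and $c' \leq c$ with $a \leq b'; c'$. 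If $b' \leq \i$, then Lemma~\ref{identityatoms} and Lemma~\ref{secons}(1) give $b' = \mathcal{S}a$, and $b'; c' \leq c'$ together with $a \leq b'; c'$ forces $c' = a$, so $\exists$ may take $z = x$ with no new edges; symmetrically she takes $z = y$ if $c' \leq \i$. Otherwise she introduces a fresh node $z$ and adjoins the edges $(x, z), (z, y)$ with labels $b', c'$, the loop $(z, z)$ with label $\mathcal{E}b' = \mathcal{S}c'$ when $end(b') \neq 0$, and $(z, x), (y, z)$ with labels $\con{b'}, \con{c'}$ whenever these converses are non-zero.

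The main obstacle is verifying (N 1)--(N 3) for this extended pre-network. Clauses (N 1) and (N 2) are per-edge and follow from a case analysis built on Lemma~\ref{cdef}, Lemma~\ref{sedef}, Lemma~\ref{secomp} and Lemma~\ref{identityatoms}, closely paralleling the proof of Lemma~\ref{restr}. The real work is (N 3): one must run through every triangle $(u, v), (u, w), (w, v)$ containing the new node $z$. Triangles with $z$ as the middle vertex split into the defining case $(x, y), (x, z), (z, y)$, which uses $a \leq b'; c'$; the converse triangle $(y, x), (y, z), (z, x)$, handled (when all three converses are non-zero) by Lemma~\ref{cycles}(6); and triangles that close through a pre-existing loop $(x, x)$ or $(y, y)$, handled by Lemma~\ref{secomp}(3),(4) together with the matchings $\mathcal{S}a = \mathcal{S}b'$ and $\mathcal{E}a = \mathcal{E}c'$ from Lemma~\ref{secons}. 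Triangles in which $z$ appears as an endpoint while the middle vertex lies in $N_{\kappa-1}$ are handled by Lemma~\ref{cycles}(2)--(5), which supply the necessary sub-atom inclusions such as $b' \leq a; \con{c'}$ and $\con{b'} \leq c; \con{a}$. Once every such triangle passes, $N_\kappa$ is a network, so $\exists$'s move is legal and the induction closes.
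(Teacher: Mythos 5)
Your proposal is correct and follows essentially the same route as the paper's proof: transfinite induction on the rounds, choosing atoms $b'\leq b$, $c'\leq c$ with $a\leq b';c'$ by complete additivity, disposing of the identity-atom case by taking $z\in\{x,y\}$, gluing in the initial networks of Lemma~\ref{restr} for the fresh edges, and verifying (N 3) on the triangles over $\{x,y,z\}$ via Lemma~\ref{cycles}. The only detail worth adding is the well-definedness of the labelling when $x=y$ (so that, e.g., $(x,z)$ and $(y,z)$ are the same edge and must receive the same label), which is exactly what Lemma~\ref{cycles}(1) supplies.
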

\begin{proof}
Let $\alpha$ be an ordinal and let $\kappa\in \alpha$. By the definition of the game, it is obvious that $\exists$ always wins in the round $\kappa$ if $\kappa=0$ or $\kappa$ is a limit ordinal. So, we may suppose that $\kappa=(\kappa-1)+1$ is a successor ordinal. We also may assume inductively that $\exists$ has managed to guarantee that $N_{\kappa-1}$ is a network. We consider the possible moves that $\forall$ can make.
\begin{enumerate}[(1)]
\item Suppose that $\forall$ picks a non-zero $a\in \a{A}$. Then, $\exists$ chooses $a^{-}\in At(\a{A})$ with $a^{-}\leq a$. She picks brand new nodes $x,y\not\in N_{\kappa-1}$ such that $x=y$ iff $a^{-}\leq \i$. Now, $\exists$ extends $N_{\kappa-1}$ to $N_{\kappa}$ by adding the new nodes $x$ and $y$, and by adding the following edges:
\begin{itemize}
\item She adds $(x,y)$ with label $N_{\kappa}(x,y)=a^{-}$.
\item If $st(a^{-})\not=0$ then she adds $(x,x)$ with label $N_{\kappa}(x,x)=\mathcal{S}a^{-}$.
\item If $end(a^{-})\not=0$ then she adds $(y,y)$ with label $N_{\kappa}(y,y)=\mathcal{E}a^{-}$.
\item If $\con{(a^{-})}\not=0$ then she also adds $(y,x)$ with label $N_{\kappa}(y,x)=\con{(a^{-})}$.
\end{itemize}
Lemma~\ref{identityatoms} implies that $N_{\kappa}$ is a well defined pre-network. Note that there are no edges connecting the old nodes of $N_{\kappa-1}$ together with the new nodes. Also, by the inductive hypothesis, $N_{\kappa-1}$ is a network. Thus, Lemma~\ref{restr} implies that $N_{\kappa}$ is indeed a network.
\item Alternatively, suppose that $\forall$ chooses an edge $(x,y)\in N_{\kappa-1}$ and two elements $b,c\in \a{A}$ with $N_{\kappa-1}(x,y)\leq b;c$. There are two cases. First suppose that there is $z\in N_{\kappa-1}$ such that $(x,z),(z,y)\in N_{\kappa-1}$, $N_{\kappa-1}(x,z)\leq b$ and $N_{\kappa-1}(z,y)\leq c$. In this case, $\exists$ lets $N_{\kappa}=N_{\kappa-1}$.

Now assume there is no such $z$. Since composition is completely additive and $\a{A}$ is atomic, then $$b;c=\sum\{b^{-};c^{-}:b^{-},c^{-}\in At(\a{A}), b^{-}\leq b, c^{-}\leq c\}.$$ So we may choose two atoms $b^{-},c^{-}\in At(\a{A})$ such that $b^{-}\leq b$, $c^{-}\leq c$, and $N_{\kappa-1}(x,y)\leq b^{-};c^{-}$. Our assumptions imply that $b^{-}\leq -\i$ and $c^{-}\leq -\i$. Otherwise, suppose that $b^{-}\leq \i$. Then, $N_{\kappa-1}(x,y)\leq \i;1$, and by condition (N 1b) we must have $(x,x)\in edges(N_{\kappa-1})$. By axiom (Ax 6), it follows that
$$N_{\kappa-1}(x,y)\leq b^{-};c^{-}\implies N_{\kappa-1}(x,y)\leq c^{-} \implies N_{\kappa-1}(x,y)=c^{-}$$
and by axioms (Ax 9) and \cite[Theorem 1.8 (i)]{kramer}, we have
\begin{eqnarray*}
N_{\kappa-1}(x,y)&\leq& N_{\kappa-1}(x,x);N_{\kappa-1}(x,y)\\
&\leq& N_{\kappa-1}(x,x);(b^{-};c^{-})\\
&\leq& (N_{\kappa-1}(x,x);b^{-});c^{-}\\
&\leq& (N_{\kappa-1}(x,x)\cdot b^{-});c^{-}
\end{eqnarray*}
which implies that $N_{\kappa-1}(x,x)\cdot b^{-}\not=0$, in other words $N_{\kappa-1}(x,x)=b^{-}$. Thus, we could choose $z=x$, and this makes a contradiction. So we must have $b^{-}\leq -\i$ and (similarly) $c^{-}\leq -\i$.

Now, $\exists$ chooses a brand new node $z\not\in N_{\kappa-1}$, and she creates $N_{\kappa}$ with nodes those of $N_{\kappa-1}$ plus $z$ and edges those of $N_{\kappa-1}$ plus:
\begin{itemize}
\item $(x,z)$ and $(z,y)$ with labels $N_{\kappa}(x,z)=b^{-}$ and $N_{\kappa}(z,y)=c^{-}$.
\item She adds $(z,x)$ if and only if $\con{(b^{-})}\not=0$, and its label would be $N_{\kappa}(z,x)=\con{(b^{-})}$.
\item She adds $(y,z)$ if and only if $\con{(c^{-})}\not=0$, and its label would be $N_{\kappa}(y,z)=\con{(c^{-})}$.
\item She also adds $(z,z)$ if and only if $end(b^{-})\not=0$, and its label would be $N_{\kappa}(z,z)=\mathcal{E}b^{-}$.
\end{itemize}
Note that $x=y$ if and only if $N_{\kappa-1}(x,y)\leq \i$. Thus, Lemma~\ref{cycles} (1) implies that $N_{\kappa}$ is a well defined pre-network. Similarly to the preceding case, we claim that $N_{\kappa}$ is a network. If
$z\in nodes(N_{\kappa-1})$ then $N_{\kappa}$ is a network by the inductive hypothesis, and we are done. So, suppose that $z$ is not in $N_{\kappa-1}$. In what follows, assume that $N_{\kappa-1}(x,y)=a^{-}$.

For each $V\subseteq nodes(N_{\kappa})$, we define ${N_{\kappa}\upharpoonright}_V$, the {\it restriction} of $N_{\kappa}$ to $V$, to be the pre-network whose nodes are the nodes in $V$ and whose edges are $edges(N_{\kappa})\cap (V\times V)$. Every edge in ${N_{\kappa}\upharpoonright}_V$ keeps its label as in the pre-network $N_{\kappa}$. We note that:
\begin{enumerate}
\item The restriction ${N_{\kappa}\upharpoonright}_{nodes(N_{\kappa-1})}$ is just $N_{\kappa-1}$, which is a network.
\item The restriction ${N_{\kappa}\upharpoonright}_{\{x,z\}}$ is $N_{xz}^{b^{-}}$. This is a consequence of the inductive hypothesis (In particular, the pre-network $N_{\kappa-1}$ satisfies condition (N 1)) and Lemma~\ref{secons} (1).
\item The restriction ${N_{\kappa}\upharpoonright}_{\{z,y\}}$ is $N_{zy}^{c^{-}}$. Again, this follows from the inductive hypothesis and Lemma~\ref{secons} (2,3).
\end{enumerate}
By the inductive hypothesis and Lemma~\ref{restr}, these
three restrictions are networks. It is now apparent that (N 1) and (N 2) hold for $N_{\kappa}$, since for any edge $(p, q)$ of $N_{\kappa}$, both of $p$ and $q$ must lie in one of the three networks above, all of which satisfy (N 1) and (N 2). For the same reason, (N 3) holds whenever the three nodes
mentioned in (N 3) are not pairwise distinct. So, it remains to check (N 3) for every three pairwise distinct nodes $p,q,r$ of $N_{\kappa}$ with $(p,q),(p,r),(r,q)\in edges(N_{\kappa})$.

If $p,q,r$ are all in $N_{\kappa-1}$ then this instance
of (N 3) holds because of the assumption that $N_{\kappa-1}$ is a network. So, we can suppose $z \in\{p, q, r\}$, and
hence that $\{p, q, r\} = \{x, y, z\}$ (since $(p,q),(p,r), (r,q)$ are edges of $N_{\kappa}$). There
are now six cases to consider, corresponding to the six permutations of $\{x, y, z\}$:
\begin{itemize}
\item Remember $a^{-}\leq b^{-};c^{-}$. By the construction, $(x,y)$, $(x,z)$ and $(z,y)$ are all edges in $N_{\kappa}$, and
\begin{equation}\label{fin1}
N_{\kappa}(x,y)\cdot(N_{\kappa}(x,z);N_{\kappa}(z,y))=a^{-}\cdot(b^{-};c^{-})=a^{-}\not=0.
\end{equation}
\item Suppose that $(y,z)\in edges(N_{\kappa})$. Then, we should have $\con{(c^{-})}\not=0$ and $N_{\kappa}(y,z)=\con{(c^{-})}$. Thus, by Lemma~\ref{cycles} (3), it follows that
\begin{equation}\label{fin2}
N_{\kappa}(x,z)\cdot(N_{\kappa}(x,y);N_{\kappa}(y,z))=b^{-}\cdot (a^{-};\con{(c^{-})})=b^{-}\not=0.
\end{equation}
In the same way, by Lemma~\ref{cycles} (2), if $(z,x)\in edges(N_{\kappa})$ then
\begin{equation}\label{fin3}
N_{\kappa}(z,y)\cdot(N_{\kappa}(z,x);N_{\kappa}(x,y))= c^{-}\cdot(\con{(b^{-})};a^{-})=c^{-}\not=0.
\end{equation}
\item Suppose that $(z,x)\in edges(N_{\kappa})$. Then, by the construction, we have $\con{(b^{-})}\not=0$ and $N_{\kappa}(z,x)=\con{(b^{-})}$. Remember $N_{\kappa-1}$ is a network, so if $(y,x)\in edges(N_{\kappa})$ (this happens iff $(y,x)\in edges(N_{\kappa-1}))$ then (N 2) implies that $\con{(a^{-})}\not=0$ and $N_{\kappa}(y,x)=N_{\kappa-1}(y,x)=\con{(a^{-})}$. Hence, by Lemma~\ref{cycles} (4),
\begin{equation}\label{fin4}
N_{\kappa}(z,x)\cdot(N_{\kappa}(z,y);N_{\kappa}(y,x))=\con{(b^{-})}\cdot (c^{-};\con{(a^{-})})=\con{(b^{-})}\not=0.
\end{equation}
Again, by Lemma~\ref{cycles} (5), if $(y,z),(y,x)\in edges(N_{\kappa})$ then
\begin{equation}\label{fin5}
N_{\kappa}(y,z)\cdot(N_{\kappa}(y,x);N_{\kappa}(x,z))=\con{(c^{-})}\cdot (\con{(a^{-})};b^{-})=\con{(c^{-})}\not=0.
\end{equation}
\item Suppose that $(z,x),(y,z),(y,x)$ are all edges in the pre-network $N_{\kappa}$. Then, $\con{(a^{-})}\not=0$, $\con{(b^{-})}\not=0$, $\con{(c^{-})}\not=0$, $N_{\kappa}(y,x)=\con{(a^{-})}$, $N_{\kappa}(z,x)=\con{(b^{-})}$ and $N_{\kappa}(y,z)=\con{(c^{-})}$. Thus, by Lemma~\ref{cycles} (6),
\begin{equation}\label{fin6}
N_{\kappa}(y,x)\cdot(N_{\kappa}(y,z);N_{\kappa}(z,x))=\con{(a^{-})}\cdot(\con{(c^{-})};\con{(b^{-})})=\con{(a^{-})}\not=0.
\end{equation}
\end{itemize}
Hence, by \eqref{fin1}, \eqref{fin2}, \eqref{fin3}, \eqref{fin4}, \eqref{fin5} and \eqref{fin6}, condition (N 3) holds for $N_{\kappa}$ as desired.
\end{enumerate}
Therefore, if $\exists$ plays according to the strategy above she can win any play of the game $G_{\alpha}(\a{A})$, regardless of what moves $\forall$ makes.
\end{proof}
\begin{proof}[Proof of Theorem~\ref{main}]Let $H\subseteq\{r,s\}$. We prove the non-trivial direction. Let $\a{A}\in\Me_H$, then we need to prove that $\a{A}\in \mathbb{I}\R_H$. Let $\a{A}^+\supseteq\a{A}$ be the perfect extension of $\a{A}$ as a Boolean algebra with operators defined in \cite{tarski}. Clearly, $\a{A}^+\in\Me_H$ and $\a{A}^+$ is a perfect algebra in the sense of Definition~\ref{perfect}, see \cite[Theorem 2.4, Theorem 2.15 and Theorem 2.18]{tarski}.

Let $\alpha$ be an ordinal (large enough) and consider a play $\langle N_{\kappa}:\kappa\in\alpha\rangle$ of $G_{\alpha}(\a{A}^+)$ in which $\exists$ plays as in Proposition~\ref{winning}, and $\forall$ plays every possible move at some stage of play. That means,
\begin{enumerate}[(G 1)]
\item each non-zero $a\in \a{A^+}$ is played by $\forall$ in some
round, and
\item for every $b,c\in\a{A}^+$, every $\kappa\in\alpha$, and each
pair $x, y$ of nodes of $N_{\kappa}$ with $(x,y)\in edges(N_{\kappa})$ and $N_{\kappa}(x, y)\leq b;c$, $\forall$ plays $x$, $y$, $b$, $c$ in some round.
\end{enumerate}
This can be guaranteed by choosing $\alpha$ large enough.

Let $U=\bigcup\{nodes(N_{\kappa}):\kappa\in\alpha\}$ and let $$W=\bigcup\{edges(N_{\kappa}):\kappa\in\alpha\}\subseteq U\times U.$$
We need to check that $W$ is an $H$-relation on $U$. If $s\in H$ then, by axiom (Ax $s$) and the fact that $\con{}$ is completely additive, it follows that $\con{a}\not=0$ for every atom $a\in\a{A}^{+}$. Thus, by condition (N 1d), $W$ in this case must be symmetric. Similarly, if $r\in H$ then axiom (Ax {$r$}) plus the complete additivity of the composition give $st(a)\not=0$ and $end(a)\not=0$, for every atom $a\in\a{A}^{+}$. Thus, by conditions (N 1b) and (N 1c), $W$ must be reflexive. Thus, it remains to show that $\a{A}^+$ is  embeddable into $\a{Re}(W)$. For this, we define the following function: For each $a\in \a{A}^+$, let
$$h(a)=\{(x,y)\in W: \exists\kappa\in\alpha \ ((x,y)\in edges(N_{\kappa}) \text{ and }N_{\kappa}(x,y)\leq a)\}.$$
It is not hard to see that $h$ is a Boolean homomorphism. Also, (G 1) above implies that $h$ is one-to-one. We check conversion, composition, and the identity. Let $b,c\in\a{A}^+$. Then, for each $(x,y)\in W$, we have
\begin{eqnarray*}
&& (x,y)\in h(b;c)\\
&\iff& \exists\kappa\in\alpha\left( (x,y)\in N_{\kappa} \text{ and }N_{\kappa}(x,y)\leq b;c\right)\\
&\iff& \exists z\in U \ \exists\kappa\in\alpha \ ((x,z),(z,y)\in N_{\kappa}, N_{\kappa}(x,z)\leq b \text{ and }N_{\kappa}(z,y)\leq c)\\
&\iff& \exists z\in U \left( (x,z)\in h(b)\text{ and }(z,y)\in h(c)\right)\\
&\iff& (x,y)\in h(b)\circ h(c).
\end{eqnarray*}
The second $\iff$ follows by (G 2). For conversion, let $(x,y)\in W$. By conditions (N 1d) and (N 2) of the networks and \cite[Theorem 1.3 (iii) and (iv)]{kramer}, we have
\begin{eqnarray*}
&&(x,y)\in h(\con{b})\\
&\implies& \exists\kappa\in\alpha \ \left( (x,y)\in N_{\kappa}\text{ and }N_{\kappa}(x,y)\leq\con{b}\right)\\
&\implies& \exists\kappa\in\alpha \ \left( (x,y),(y,x)\in N_{\kappa} \text{ and }N_{\kappa}(y,x)\leq \con{(N_{\kappa}(x,y))}\leq \con{(\con{b})}\leq b\right)\\
&\implies& (x,y),(y,x)\in W \text{ and } (y,x)\in h(b) \\ &\implies& (x,y)\in \otimes h(b).
\end{eqnarray*}
Conversely, by condition (N 2) and \cite[Theorem 1.3 (iii)]{kramer},
\begin{eqnarray*}
&&(x,y)\in \otimes h(b) \\
&\implies & (x,y),(y,x)\in W \text{ and }(y,x)\in h(b)\\
&\implies& \exists\kappa\in \alpha \ \left( (x,y),(y,x)\in N_{\kappa}\text{ and }N_{\kappa}(y,x)\leq b\right)\\
&\implies& \exists\kappa\in \alpha \ \left( (x,y),(y,x)\in N_{\kappa}\text{ and }N_{\kappa}(x,y)\leq \con{(N_{\kappa}(y,x))}\leq \con{b}\right)\\
&\implies& (x,y)\in h(\con{b}).
\end{eqnarray*}
Finally, condition (N 1a) guarantees that $h(\i)=\delta=\{(x,y)\in W:x=y\}$. Thus, $\a{A}^{+}\subseteq\a{Re}(W)$. Therefore, $\a{A}\in\mathbb{I}\R_H$ (i.e. $\a{A}$ is representable) as desired.
\end{proof}

\end{document}